\documentclass[onecolumn,a4paper,12pt]{article}
\usepackage[body={17.4true cm,23true cm}]{geometry}
\usepackage{amsmath,amssymb,amsthm,dsfont,mathrsfs}
\usepackage{indentfirst}
\usepackage{ifpdf}
\usepackage[T1]{fontenc}
\usepackage{indentfirst}
\usepackage{mathpazo}
\usepackage{cite}
\ifpdf
  \usepackage[pdftex]{graphicx}
   \usepackage[pdftex,bookmarks=true,bookmarksnumbered=true,bookmarksopen=true,
     colorlinks=true,citecolor=blue,linkcolor=red,anchorcolor=green,urlcolor=blue]{hyperref}
   \DeclareGraphicsExtensions{.pdf}
\else
   \usepackage[dvips]{graphicx}
   \usepackage[dvipdfm,bookmarks=true,bookmarksnumbered=true,bookmarksopen=true,
     colorlinks=true,citecolor=blue,linkcolor=red,anchorcolor=green,urlcolor=blue]{hyperref}
   \DeclareGraphicsExtensions{.eps}
\fi

\allowdisplaybreaks

\newcommand{\real}{\mathbb{R}}

\newtheorem{lemma}{Lemma}
\newtheorem{theorem}{Theorem}

\newtheorem{definition}{Definition}
\newtheorem{remark}{Remark}

\newenvironment{keywords}{\begin{center}{\bf Keywords}\end{center}\vspace{0em}}{}

\numberwithin{equation}{section}

\linespread{1.5}

\title{Synchronized output regulation of nonlinear multi-agent systems
\thanks{This research was supported a grant from the National 863 Program of China (2011AA050204), the National Natural Science Foundation of China (61074122, 61104149), the Zhejiang Province Natural Science Fund (Y1090339,  LY13F030001), the Program for New Century Excellent Talents in University (NCET-11-0459) and the Fundamental Research Funds for the Central Universities (2011QNA4010).}
}
\author{Ji~Xiang$^a$\thanks{Corresponding author. Email: jxiang@zju.edu.cn},~Yanjun~Li$^b$ and Wei Wei$^a$\\
\small $^a$ Department of System Science and Engineering, College of Electrical Engineering, \\[-0.5em]
\small      Zhejiang University, Hangzhou, 310027, China;\\
\small $^b$ School of Information and Electrical Engineering, Zhejiang University City College,\\[-0.5em]
\small      Hangzhou, 310030, China.
}
\date{}
\begin{document}
 \maketitle
\begin{abstract}
This paper considers the synchronized output regulation (SOR) problem of nonlinear
multi-agent systems with switching graph. The SOR means that all agents regulate their
outputs to synchronize on the output of a predefined common exosystem. Each agent
constructs its local exosystem with the same dynamics as that of the common exosystem
and exchanges the state information of the local exosystem. It is shown that the SOR is
solvable under the assumptions same as that for nonlinear output regulation of a single
agent, if the switching graph satisfies the bounded interconnectivity times condition.
Both state feedback and output feedback are addressed. A numerical simulation is made
to show the efficacy of the analytic results.
\end{abstract}

\begin{center}
\begin{keywords}
Synchronized output regulation (SOR), nonlinear system, multi-agent, switching graph.
\end{keywords}
\end{center}

\section{Introduction}
Recent years have witnessed the growing interest in the synchronization of networked systems because it is a ubiquitous phenomena in nature and because of its potential applications on secure communication, distributed generation of the grid, clock synchronization, formation control of multiple robots, and so on. The state synchronization problem might be rooted in the work of Wu and Chua \cite{WuTCASI1995} and recently has been rejuvenated in linear systems with attentions on the accessibility of partial states or switching graph
\cite{ScardoviAU2009} \cite{SeoAU2009} \cite{LiTCASI2010} \cite{DzhunusovARC2011}. Different from the state synchronization that happens between identical systems, the output synchronization can arise between non-identical systems and thereby is more realistic.

Output synchronization for nonlinear input-output passive systems has been studied in \cite{ChopraBK2006}, where under the passive-based design, the output synchronization can be achieved for many cases including balanced graph, nonlinear coupling function and communication delay. In \cite{XiaoSCL2010}, the velocity synchronization problem for second-order integrators has been investigated. The above two studies only take aim at driving the outputs of the agents to each other asymptotically but do not care what the outputs will synchronize on. In \cite{XiangTAC2009}, the linear SOR has been addressed for identical multi-agent systems under the dynamic relative state feedback. There the agents have not only their outputs synchronize but also evolve ultimately on an a trajectory produced by a predefined reference exosystem. In \cite{WielandAU2011}, it is shown that the internal model principle is the sufficient and necessary condition for non-trivial linear output synchronization. There a dynamic controller has been presented for leaderless SOR of linear systems with switching graph. Robust linear SOR have been studied in \cite{WangTAC2010} \cite{KimTAC2011} by only using relative output information. The leader-following SOR of linear systems with switching graph has been investigated in \cite{SuTSMCB2012}.

As for SOR of nonlinear multi-agent systems, there are a few works reported. Gazi \cite{GaziIJC2005} has utilized the nonlinear output regulation method to deal with the formation control problem. There, however, the reference signal, which is stricter than the reference system, is assumed to be known by all agents so that the problem reduces to the completely decoupled output regulation problem. Liu \cite{LiuIETCTA2012} has studied the leader-following SOR under the error feedback for a no-loop graph. Moreover, the robustness is addressed with two extra assumptions: the reference exosystem is linear and the solution of regulator equation are the k-th  polynomials. Xu and Hong \cite{XuCCC2012} have studied the multi-agent systems consisting of two level networks, physical coupling and communication graph. There a networked internal model is proposed for the solution of SOR. They also assume that the graph contain no-loop.

This paper addresses the SOR problem for general nonlinear multi-agent systems with switching topology. Our framework is similar to that in \cite{WielandAU2011} and \cite{SuTSMCB2012} in that the information delivered among the network is assumed to be the state of the local exosystem constructed by agent itself. Both the dynamic state feedback controller and the dynamic output feedback controller are proposed. We show that the SOR can be achieved without extra conditions imposed on the agent dynamics when the switching graph satisfies the bounded interconnectivity times condition (jointly connected condition). The most relevant to our work is the recent work in \cite{LiuND2012} where, however, the graph is fixed and the regulator equation is strengthened to one for the whole multi-agent system so that the result obtained is not scalable.

The remainder of this paper is organized as follows. Problem formulation, as well as
two kinds of controllers, is presented in Section \ref{sec02}. Main results are shown in Section \ref{sec03}; the
exponential synchronization of coupled exosystems is first shown and then synchronized
output regulation is proved for both kinds of controllers. The extension to leader-following
case is addressed in Section \ref{sec04}. A simulation example is illustrated in Section \ref{sec05}, followed by
a conclusion in Section \ref{sec06}.

\section{Problem Statement} \label{sec02}
\subsection{Model}
Consider a multi-agent system consisting of $N$ agents. Each agent has the following dynamics modeled by
\begin{equation}   \label{II01}
\left\{\begin{split}
\dot{x}_i &= f_i(x_i)+g_i(x_i)u_i  \\
y_i & = h_i(x_i),
\end{split}\right., \quad  i=1,2,\cdots,N,
\end{equation}
where $x_i$ is the state, defined on a neighborhood $X_i$ of the origin of $\real^{n_i}$, $u_i\in\real^{m_i}$ is the input, and $y_i\in\real^p$ is the output. The vector $f_i(x_i)$ and the $m_i$ columns of matrix $g_i(x_i)$ are smooth (i.e., $C^{\infty}$) vector fields on $X_i$. $h_i(x_i)$ is a smooth mapping defined on $X_i$.

Each agent drives its output to track the output of a common exosystem, as formulated by
\begin{subequations}
\begin{align}
 &\dot{w}_0=s(w_0)\\
 &y_i+q(w_0)\rightarrow 0.
 \end{align}
\end{subequations}
The first equation describes an autonomous system, the so-called exosystem, defined in a neighborhood $W_0$ of the origin of $\real^s$. The second equation means that the output should track a reference signal produced by the exosystem. The vector $s(w_0)$ is a smooth vector field on $W_0$ and $q(w_0)$ is a smooth map defined on $W_0$.

As for the SOR, the requirements on output $y_i$ are two folds: one is that $y_i$ belongs to a fixed family of trajectories determined by the pair of $(s(w_0), q(w_0))$ with the corresponding initial condition $w_0(0)$ being allowed to vary on a predefined set $\hat{W}_0\subset W_0$; the other is that $y_i\rightarrow y_j$ for all $i,j\in\mathcal{V}$.  The first one is the output regulation problem, which might be solved by a decentralized way and the second one is the synchronization problem, which has to rely on the information exchange to solve.

Generally, a digraph $\mathcal{G}=\{\mathcal{V},\mathcal{E}\}$ is used to depict the communication channels of multi-agent system \eqref{II01}, where node set $\mathcal{V}=\{1,2,\cdots,N\}$ is the index set of agents and edge set $\mathcal{E}\subseteq \{\mathcal{V}\times \mathcal{V}\}$ consists of ordered pair of nodes $(i,j)$, called edge. An edge $(i,j)\in\mathcal{E}$ if and only if there is communication channel from node $i$ to node $j$, where node $i$ is called parent node and node $j$ is called child node. A directed path of digraph is a sequence of edges with form $(i_1,i_2), (i_2,i_3), \cdots$. A tree $\mathcal{G}_t=\{\mathcal{V}_t,\mathcal{E}_t\}$ is a graph where every node has exactly one parent node except for one node, the so-called root node, which has no parent node but has a directed path to every other node. The graph $\mathcal{G}_s=\{\mathcal{V}_s,\mathcal{E}_s\}$ is a subgraph of $\mathcal{G}$ if $\mathcal{V}_s\subseteq\mathcal{V}$ and $\mathcal{E}_s\subseteq \mathcal{E}\bigcap(\mathcal{V}_s\times \mathcal{V}_s)$. The tree $\mathcal{G}_t$ is a spanning tree of graph $\mathcal{G}$ if $\mathcal{G}_t$ is a subgraph of $\mathcal{G}$ with $\mathcal{V}_t=\mathcal{V}$.

A switching graph, defined on a piecewise constant switching signal $\sigma(t):\real^+\mapsto \mathcal{P}=\{1,2,\cdots,P\}$, is denoted by $\mathcal{G}_{\sigma(t)}=\{\mathcal{V},\mathcal{E}_{\sigma(t)}\}$, where set $\mathcal{P}$ indexes the total $P$ number digraphs and $\mathcal{E}_{s}\subseteq\mathcal{V}\times \mathcal{V}$ with $s\in\mathcal{P}$. The time instants when $\sigma$ switches is denoted by an increasing sequence $t_k$, $k=0,1,2,\cdots,$ with $t_0=0$. Denote by $\sigma_k$ the value of $\sigma(t)$ when $t\in[t_{k-1},t_k)$. Denote by $\mathcal{A}^{\sigma_k}$ and $L^{\sigma_k}$ the adjacency matrix and the Laplacian matrix of $\mathcal{G}_{\sigma_k}$, respectively. We assume that any two consecutive switching instants are separated by a dwell-time $D_t$, i.e., $t_k-t_{k-1}\ge D_t$ so as to guarantee that the switching graph is non-chattering and zeno behavior cannot occur. A union graph $\mathcal{G}_{[t^1,t^2]}$ over an interval time $[t^1, t^2]$ is defined by $\mathcal{G}_{[t^1,t^2]}\triangleq (\mathcal{V}, \bigcup_{t\in[t^1,t^2]}\mathcal{E}_{\sigma(t)})$ that corresponds to a graph consisting of all nodes in $\mathcal{V}$ and all edges that appear at any time $t\in[t^1,t^2]$.

In order for the SOR, one natural route is firstly to synchronize the exosystems of all agents by exchanging the their state and then to drive the agent output to track the output of the local exosystem \cite{WielandAU2011}\cite{SuTSMCB2012}. In the first step, each agent builds the following coupled exosystem, based on the communication graph $\mathcal{G}_{\sigma(t)}$,
\begin{equation} \label{II02}
  \dot{w}_i = s(w_i)+\sum_{j=1}^N a_{ij}^{\sigma(t)}(w_j-w_i),
\end{equation}
where $a_{ij}^{\sigma(t)}$ denotes the $i$th row and $j$th column element of adjacency matrix $\mathcal{A}^{\sigma(t)}$ of graph $\mathcal{G}_{\sigma(t)}$. If $(j,i)\in\mathcal{E}_{\sigma(t)}$, then $a^{\sigma(t)}_{ij}>0$; otherwise, $a^{\sigma(t)}_{ij}=0$. In this case,  the tracking error for each agent is defined as
\begin{equation}
  \label{II03}
  e_i = h_i(x_i)+q(w_i).
\end{equation}

\subsection{Controllers}
Two kinds of controllers are considered in this paper,
\begin{itemize}
  \item [1)] \textbf{Distributed dynamic state feedback controller}
\begin{equation} \label{II04}
  \left\{
  \begin{split}
    & \dot{w}_i = s(w_i)+\sum_{j=1}^N a_{ij}^{\sigma(t)}(w_j-w_i),\\
    & u_i = \alpha_i(x_i,w_i)
  \end{split}
  \right.,  \quad i\in\mathcal{V},
\end{equation}
where $\alpha_i(x_i,w_i)$ is a $C^k$ (for some integer $k\ge2$) mapping defined on $X_i\times W$, satisfying $a_i(0,0)=0$. Combining \eqref{II04} and \eqref{II01} yields the following closed-loop system,
\begin{equation}
  \label{II05}
  \left\{\begin{split}
    \dot{x}_i& = f_i(x_i)+g_i(x_i)\alpha_i(x_i,w_i)  \\
    \dot{w}_i& = s(w_i)+\sum_{j=1}^N a_{ij}^{\sigma(t)}(w_j-w_i)
  \end{split}, \right.\quad i\in\mathcal{V},
\end{equation}
which has an equilibrium at, $(x_i,w_i)=(0,0)$ for all $i\in\mathcal{V}$.
\item [2)] \textbf{Distributed dynamic output feedback controller}
\begin{equation}
\label{II06}
\left\{ \begin{split}
\dot{w}_i& = s(w_i)+\sum_{j=1}^N a_{ij}^{\sigma(t)}(w_j-w_i)\\
  \dot{z}_i& = \eta_i(z_i,y_i)\\
  u_i &= \alpha_i(z_i,w_i)
\end{split},
\right. \quad i\in\mathcal{V},
\end{equation}
where $z_i\in\real^{n_i}$ is the observer state, defined on a neighborhood $Z_i$ of the origin of $\real^{n_i}$.  For each $y_i\in\real^p$, $\eta_i(z_i,y_i)$ is a $C^k$ vector field on $Z_i$ (for some integer $k\ge 2$). The closed-loop system under controller \eqref{II06} has the form
\begin{equation} \label{II07}
  \left\{
  \begin{split}
    \dot{x}_i&=f_i(x_i)+g_i(x_i)\alpha_i(z_i,w_i)\\
    \dot{z}_i &  = \eta_i(z_i,h_i(x_i))\\
    \dot{w}_i & = s(w_i) + \sum_{j=1}^N a_{ij}^{\sigma(t)}(w_j-w_i)
  \end{split},
  \right.\quad i\in\mathcal{V},
\end{equation}
which has an equilibrium, $(x_i,z_i,w_i)=(0,0,0)$ for all $i\in\mathcal{V}$, when $\eta_i(0,0)=0$ for all $i\in\mathcal{V}$.
\end{itemize}

The purpose of SOR includes three aspects, local asymptotically stable, output regulation and output synchronization. Define the stacked vector $x=[x_1^T,\cdots,x_N^T]^T$, $w=[w_1^T,\cdots,w_N^T]^T$ and $z=[z_1^T,\cdots,z_N^T]^T$. Correspondingly, their domains are defined as $X=X_1\times\cdots\times X_N$, $W=W_1\times\cdots\times W_N$, and $Z=Z_1\times\cdots\times Z_N$. Formally, the following two problems are proposed,
\begin{definition}
  [\textbf{State Feedback Synchronized Regulator Problem}] Find, if possible, $\alpha_i(x_i,w_i)$ for node $i$ such that:
  \begin{enumerate}
    \item [1a)] the equilibrium $x=0$ of
    \begin{equation}  \label{II08}
      \dot{x}_i = f_i(x_i)+g_i(x_i)\alpha_i(x_i,0), \quad i\in\mathcal{V}
    \end{equation}
    is exponentially stable.
    \item [1b)] there are a neighborhood $U\subset X\times W$ of $(0,0)$ and a exosystem,
    \begin{equation} \label{II09}
      \dot{w}_0=s(w_0),
    \end{equation}
    defined on the neighborhood $W_0$ of origin of $\real^{s}$, such that, for each initial condition $(x(0),w(0))\in U$, there exists a initial condition $w_0(0)\in W_0$ for system \eqref{II09} such that the solution of \eqref{II05} satisfies
    \begin{equation}
      \label{II10}
      \lim_{t\rightarrow \infty} (h_i(x_i(t))+q(w_0(t))) = 0, \quad \forall\, i\in\mathcal{V}.
    \end{equation}
  \end{enumerate}
\end{definition}

\begin{definition}
  [\textbf{Error feedback Synchronized Regulator Problem}] Find, if possible, $\alpha_i(z_i,w_i)$ and $\eta_i(z_i,y_i)$, such that
  \begin{itemize}
    \item [2a)] the equilibrium $(x,z)=0$ of
    \begin{equation} \label{II11}
    \begin{split}
      &\dot{x}_i=f_i(x_i)+g_i(x_i)\alpha_i(z_i,0),\\
      & \dot{z}_i=\eta_i(z_i,h_i(x_i)),
      \end{split}\quad \quad i\in\mathcal{V},
    \end{equation}
  is exponentially stable.
  \item [2b)] there exists a neighborhood $U\subset X\times Z\times W$ of $(0,0,0)$ and an exosystem
  \begin{equation} \label{II12}
    \dot{w}_0=s(w_0)
  \end{equation}
  defined on a neighborhood $W_0$ of origin of $\real^{s}$, such that, for each initial condition $(x(0), z(0)$, $ w(0))\in U$, there exists a initial condition $w_0(0)\in W_0$ for system \eqref{II12} such that the solution of \eqref{II07} satisfies
  \begin{equation}
    \label{II13}
    \lim_{t\rightarrow \infty} (h_i(x_i(t))+q(w_0(t)))=0, \quad \forall\, i\in\mathcal{V}.
  \end{equation}
  \end{itemize}
\end{definition}

\medskip
For the solvability of the above problems, the following assumptions are made,
\begin{enumerate}
  \item [A1)] The exosystem modeled by $\dot{w}_0=s(w_0)$ has a stable equilibrium at $w_0=0$, and there is an open neighborhood of $w_0=0$ in which every point is Poisson stable.
  \item [A2)] The pair $\big(f_i(x_i),g_i(x_i)\big)$ has a stabilizable linear approximation at $x_i=0$, for all $i\in\mathcal{V}$.
  \item [A3)] The pair $\big(f_i(x_i),h_i(x_i)\big)$ has a detectable linear approximation at $x_i=0$ for all $i\in\mathcal{V}$.
  \item [A4)] There is a bounded time length $T$ and a starting time $t>0$ such that for each $k$, $k=1,2,\cdots$, the union graph $\mathcal{G}_{[t+(k-1)T,t+kT]}$ has a spanning tree embedded.
\end{enumerate}

Assumptions A1)$\sim$A3) are standard for nonlinear output regulation problems \cite{IsidoriTAC1990}. Assumption A4) is referred to as the bounded interconnectity times condition in \cite{XiangIJC2011}, which is the weakest condition for the consensus seeking of a switching diagraph and has many invariant versions, such as the jointly connected condition \cite{HongTAC2007} and uniformly quasi-strongly connected condition \cite{lin2006Thesis}.

\section{Main results} \label{sec03}
\subsection{Exponentially synchronization of coupled exosystems}
Noticing that the coupled exosystem \eqref{II02} is independent of the agent dynamics, the closed-loop systems \eqref{II05} and \eqref{II07} can be regarded as to be driven by an lumped exosystem $w$ of $Ns$ dimensions. Since the lumped exosystem has the dimension in excess of what is required, its dynamics must contain some decay modes, that is, in some vector directions $w$ is asymptotically converging to zeros. In order for the second condition 1b) or 2b), the undecayed mode must be the flow determined by the vector field $s(w_0)$, which means that all the exosystems should synchronize. To this end, the following result is recalled (Corollary 7 in \cite{XiangIJC2011}),
and rephrased as follows,
\begin{lemma} \label{le01}
Given a multi-agent system \eqref{II02} with communication graph $\mathcal{G}_{\sigma(t)}$satisfying assumption A4). If the largest Lyapunov exponent $\nu_{max}$ of system $\dot{w}_0=s(w_0)$ and the consensus convergence rate $\alpha^*(T,t)$ of $\mathcal{G}_{\sigma(t)}$ are such that
\begin{equation}
  \label{III01}
  \nu_{max}+\ln(\alpha^*(T,t))/T<0
\end{equation}
then system \eqref{II02} is locally exponentially synchronizable.
\end{lemma}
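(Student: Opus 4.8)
The plan is to split the lumped exosystem state $w=[w_1^T,\dots,w_N^T]^T$ governed by \eqref{II02} into a common‑mode part, whose flow is that of the field $s(\cdot)$, and a disagreement part, and then to show the disagreement decays exponentially. Because assumption A4) only guarantees contraction of the disagreement over time windows of length $T$, the object to track is the disagreement sampled at the endpoints $t+kT$ of those windows.

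First I would fix a reference solution $\psi(t)$ of $\dot\psi=s(\psi)$ with $\psi(0)$ in a small neighborhood of the origin, substitute $w_i(t)=\psi(t)+v_i(t)$ into \eqref{II02}, and linearize along $\psi$, obtaining $\dot v_i=A(t)v_i+\sum_{j=1}^N a_{ij}^{\sigma(t)}(v_j-v_i)+r_i$, where $A(t)=\frac{\partial s}{\partial w_0}(\psi(t))$ and $r_i$ is a remainder of second order in $\max_j\norm{v_j}$. Let $\Psi(t)$ solve $\dot\Psi=A(t)\Psi$, $\Psi(0)=I_s$, and introduce the pointwise coordinate change $\zeta_i(t)=\Psi(t)^{-1}v_i(t)$. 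Using $\frac{d}{dt}\Psi^{-1}=-\Psi^{-1}A$ and the fact that the scalars $a_{ij}^{\sigma(t)}$ commute with $\Psi^{-1}$, a one‑line computation shows that the linear part becomes exactly the vector consensus dynamics $\dot\zeta_i=\sum_{j=1}^N a_{ij}^{\sigma(t)}(\zeta_j-\zeta_i)$, plus the transformed remainder $\Psi^{-1}r_i$. Hence, by the definition of the consensus convergence rate $\alpha^*(T,t)$ of $\mathcal{G}_{\sigma(t)}$ together with A4), the disagreement of $\zeta$ (measured, say, by the diameter $\max_{l,m}\norm{\zeta_l-\zeta_m}$) is multiplied by at most $\alpha^*(T,t)$ over each window of length $T$, and on a small enough neighborhood of the origin the contribution of $\Psi^{-1}r_i$ is dominated by the disagreement itself.

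Next I would undo the transformation. Since $\zeta_i-\zeta_j=\Psi(t)^{-1}(v_i-v_j)=\Psi(t)^{-1}(w_i-w_j)$, iterating the window estimate gives, up to a fixed constant, $\norm{w_i(t+kT)-w_j(t+kT)}\le\norm{\Psi(t+kT)}\big(\alpha^*(T,t)\big)^{k}\max_{l,m}\norm{w_l(0)-w_m(0)}$. By definition, $\nu_{max}$ is the exponential growth rate of $\norm{\Psi(t)}$, so for every $\varepsilon>0$ there is $C_\varepsilon$ with $\norm{\Psi(t)}\le C_\varepsilon e^{(\nu_{max}+\varepsilon)t}$ on the relevant bounded set of trajectories. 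Because \eqref{III01} is a \emph{strict} inequality, $\varepsilon$ may be chosen so small that $\rho:=e^{(\nu_{max}+\varepsilon)T}\alpha^*(T,t)<1$; then $\norm{w_i(t+kT)-w_j(t+kT)}\le C\rho^{k}\max_{l,m}\norm{w_l(0)-w_m(0)}$, so the sampled disagreement decays geometrically. Interpolating inside each window with the finite bound $\sup_{0\le\tau\le T}\norm{\Psi(\tau)}$, and using the Lyapunov stability of $w_0=0$ from A1) to keep every trajectory in the neighborhood on which the estimates hold, one concludes local exponential synchronization.

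I expect the main obstacle to be not the algebra but two uniformity points that a rigorous proof must settle: (i) controlling the second‑order remainder $r_i$ (and the drift of the reference $\psi$) so that the linearized estimate persists — this forces the restriction to a small neighborhood and relies on the stable‑equilibrium part of A1) to keep all $v_i$ small; and (ii) securing the bound $\norm{\Psi(t)}\le C_\varepsilon e^{(\nu_{max}+\varepsilon)t}$ with a constant that is uniform over the recurrent trajectories that arise — precisely where the Poisson‑stability part of A1) enters. These are exactly the facts assembled in Corollary 7 of \cite{XiangIJC2011}; the remaining task is to verify that its hypotheses reduce to \eqref{III01} in the present notation, whereupon the lemma follows.
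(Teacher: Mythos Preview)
The paper does not prove this lemma at all: it is introduced with the words ``the following result is recalled (Corollary~7 in \cite{XiangIJC2011}), and rephrased as follows,'' and no argument is supplied. Your proposal ends by deferring to the same Corollary~7, so in that respect it coincides with the paper's treatment; the linearize-along-a-reference-trajectory sketch you give beforehand is a plausible outline of how that corollary is established, but there is no corresponding proof in the present paper against which to compare it.
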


Denote by $\Phi_t^s(w^0)$ the flow of vector field $s(w_0)$, defined for all $t\in\real$,  with $w_0(0)=w^0$. Then the maximum Lyapunov exponent of dynamic system $\dot{w}_0=s(w_0)$ is defined as \cite{CenciniBOOK2010}
\begin{equation}
  \nu_{max} = \lim_{t\rightarrow \infty} \lim_{\delta w^0\rightarrow 0}\frac{1}{t} \ln\frac{\|\Phi_t^s(w^0+\delta w^0)-\Phi_t^s(w^0)\|}{\|\delta w^0\|}
\end{equation}

On the other hand, consider the consensus rate of a switching graph. Define the time interval sequence $\{\delta_k\}$ by $\delta_k = t_k-t_{k-1}$, $k=1,2,\cdots$. Let $t^1$ and $t^2$ be two time instants located in the time slots of $[t_{i-1}, t_i)$ and $[t_{j-1},t_j)$ with $i\le j$, respectively.

Given a switching graph $\mathcal{G}_{\sigma(t)}$ satisfying Assumption A4) with given $t$ and $T$, its
consensus convergent rate is defined as the supremum of the contract rate of transition matrix $\Phi(t+(k-1)T,t+kT)$ \cite{XiangIJC2011},
\begin{equation}
  \alpha^*(T,t)=\sup_k \alpha(\Phi(t+(k-1)T,t+kT))
\end{equation}
where the contract rate $\alpha(\Phi)$ is defined as $\alpha(\Phi) = \max\limits_{x\neq 0, x\bot \mathbf{b}}\frac{\|\Phi^Tx\|}{\|x\|}$, and the transition matrix is defined as
\begin{equation} \label{III03}
  \Phi(t^2,t^1)=e^{-L^{\sigma_j}(t^2-t_{j-1})}e^{-L^{\sigma_{j-1}}\delta_{j-1}}\cdots e^{-L^{\sigma_{i+1}}\delta_{i+1}}e^{-L^{\sigma_{i}}(t_i-t^1)}
\end{equation}
where $L^{\sigma_k}$ is the Laplacian matrix of graph $\mathcal{G}_{\sigma_k}$. Throughout of this paper, $\mathbf{b}$ denotes the vector with all elements being $1$.

\begin{lemma} \label{le02}
Given a coupled exosystem \eqref{II02} with assumptions A1) and A4), then there is a dynamic system $\dot{w}_0=s(w_0)$ such that for all initial conditions $w(0)\in\mathcal{\bar{W}}\subset W$, there is a initial condition $w_0(0)\in W_0$ such that the error $\delta w_i=w_i-w_0$  exponentially converges to zero, for all $i\in\mathcal{V}$.
\end{lemma}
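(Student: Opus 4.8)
The plan is to reduce Lemma \ref{le02} to Lemma \ref{le01}: first extract its hypothesis \eqref{III01} from Assumptions A1) and A4), then apply Lemma \ref{le01} to get exponential synchronization of \eqref{II02}, and finally upgrade synchronization into convergence to a genuine solution of $\dot w_0=s(w_0)$ by constructing the right $w_0(0)$.

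First I would use A1) to control the largest Lyapunov exponent $\nu_{max}$ of \eqref{II09}. By the classical consequence of A1) used in nonlinear output regulation (cf.\ \cite{IsidoriTAC1990}), the Jacobian $S:=\frac{\partial s}{\partial w_0}(0)$ is semisimple with all eigenvalues on the imaginary axis; otherwise a nontrivial Jordan block there would force polynomial growth and contradict Lyapunov stability of $w_0=0$. Hence there is an inner product on $\real^s$ for which $S$ is skew-symmetric, so its logarithmic norm vanishes, and $\frac{\partial s}{\partial w_0}(w_0)=S+O(\|w_0\|)$ has logarithmic norm $O(\|w_0\|)$ in that norm. Consequently, along any trajectory of \eqref{II09} that remains in a ball $B_\varepsilon$, the variational flow grows at most at rate $O(\varepsilon)$; and Lyapunov stability provides, for each $\varepsilon$, a forward-invariant neighborhood of the origin inside $B_\varepsilon$, so on it $\nu_{max}=O(\varepsilon)$. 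The same smallness estimate makes the backward flow $\Phi_{-t}^s$ near the origin expand at most like $Me^{\gamma t}$ with $\gamma=O(\varepsilon)$, a bound I will need later.

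Next I would invoke A4): by the consensus-rate estimate of \cite{XiangIJC2011} for a jointly connected switching digraph, the consensus convergence rate obeys $\alpha^*(T,t)<1$ strictly, hence $\ln(\alpha^*(T,t))/T<0$. Choosing the admissible initial-condition set $\bar{\mathcal{W}}\subset W$ small enough (i.e.\ $\varepsilon$ small enough) that $\nu_{max}<-\ln(\alpha^*(T,t))/T$, inequality \eqref{III01} holds, so Lemma \ref{le01} applies: for every $w(0)\in\bar{\mathcal{W}}$ the trajectories of \eqref{II02} stay near the origin and $w_i(t)-w_j(t)\to0$ exponentially with a rate $\lambda>0$ that, by \eqref{III01}, stays bounded away from $0$ as $\varepsilon\to0$.

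Finally I would identify the common exosystem. The synchronization subspace $\mathcal{S}=\{w:w_1=\cdots=w_N\}$ is invariant for \eqref{II02} (every Laplacian $L^{\sigma_k}$ has zero row sums), and the motion induced on $\mathcal{S}$ is $N$ identical copies of $\dot w_0=s(w_0)$. I would then pull agent $1$'s trajectory back along the flow of $s$: since $\dot w_1=s(w_1)+d_1(t)$ with $\|d_1(t)\|\le c\,e^{-\lambda t}$ (the coupling term is dominated by the synchronization error), the curve $\psi(t):=\Phi_{-t}^s(w_1(t))$ satisfies $\dot\psi(t)=D\Phi_{-t}^s(w_1(t))\,d_1(t)$, of norm $O(e^{(\gamma-\lambda)t})$. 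Taking $\varepsilon$ so small that $\lambda>2\gamma$, $\dot\psi$ is integrable, so $\psi(t)$ converges to a point $w_0(0)\in W_0$; setting $w_0(t):=\Phi_t^s(w_0(0))$ yields $\|w_1(t)-w_0(t)\|\le\|D\Phi_t^s\|\,\|\psi(t)-w_0(0)\|=O(e^{(2\gamma-\lambda)t})$, and hence $\delta w_i=w_i-w_0=(w_i-w_1)+(w_1-w_0)\to0$ exponentially for every $i\in\mathcal{V}$, which is the claim. I expect this last step to be the main obstacle: it requires pushing the invariant-manifold / asymptotic-phase reasoning through the switched, time-varying coupling and checking that the rate $\lambda$ supplied by Lemma \ref{le01} genuinely dominates the small expansion rate $\gamma$ of the backward exosystem flow. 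Steps one and two are essentially just the verification of a now-standard hypothesis.
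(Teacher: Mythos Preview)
Your proposal is correct and follows the same skeleton as the paper: verify hypothesis \eqref{III01} of Lemma~\ref{le01} from A1) and A4), then invoke Lemma~\ref{le01}. The paper's execution is terser on both ends. For the Lyapunov exponent it argues directly that Poisson stability forces $\nu_{max}\le0$ (a positive exponent would send nearby trajectories permanently out of any small ball, contradicting recurrence), whereas you obtain only $\nu_{max}=O(\varepsilon)$ via the semisimple, imaginary-spectrum Jacobian $S$; both suffice since $\alpha^*(T,t)<1$, but the paper's route avoids having to shrink $\bar{\mathcal W}$ to beat a fixed consensus rate. Conversely, after applying Lemma~\ref{le01} the paper simply asserts that the synchronized limit lies on a trajectory of $\dot w_0=s(w_0)$ without constructing $w_0(0)$; your pull-back $\psi(t)=\Phi_{-t}^s(w_1(t))$ and the rate comparison $\lambda>2\gamma$ actually supply that missing asymptotic-phase argument and the explicit exponential bound on $\delta w_i$ that Lemma~\ref{le03} subsequently uses in \eqref{III11}. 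So the approaches coincide in structure, with the paper sharper on the $\nu_{max}$ step and your version more complete on the identification of the limiting exosystem solution.
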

\begin{proof}
According to Assumption A4) and definition \eqref{III03}, it follows that transition matrix $\Phi(t+(k-1)T,t+kT)$ is stochastic, indecomposable and aperiodic, and has exactly one trivial eigenvalue $1$ associated with eigenvector $\mathbf{b}$. Therefore, $\alpha^*(T,t)<1$.

On the other hand, with assumption A1), $\nu_{max}\le 0$. If not, for any given $\epsilon$, there is a time $T_\epsilon$, such that for any $0<\|\delta w^0\|<\epsilon$, $\|\Phi_t^s(w^0+\delta w^0)\|>\epsilon$ for all $t>T_\epsilon$. This is contradictory to the feature of Poisson stable \cite{IsidoriTAC1990}.

With $\nu_{max}\le0$ and $\alpha^*(T,t)<1$, making use of Lemma \ref{le01} yields that there is a neighborhood $\bar{W}\subset W$ of origin, for all initial condition $w(0)\in\bar{W}$, all the exosystems have their states exponentially synchronize on a manifold determined by $\dot{w}_0=s(w_0)$ with $w_0\in\mathcal{W}^0$.
\end{proof}

\subsection{Solution of static feedback synchronized regulator problem}
Before proceeding the main results, some matrices are firstly introduced, arising from the linearization of nonlinear dynamics on the equilibrium of origin.
\begin{equation}
  A_i = \left[ \frac{\partial f_i}{\partial x_i}\right]_{x_i=0},  \quad B_i = g_i(0), \quad \quad C_i= \left[ \frac{\partial h_i }{\partial x_i}\right]_{x_i=0}.
  \end{equation}

Denote by $\psi_{f}$ the reminder of the linear approximation of a vector function $f$, that is,
\begin{equation}
  f(x)=f(0)+\left.\frac{\partial f}{\partial x}\right|_{x=0} x + \psi_f(x).
\end{equation}
Let $\mathrm{D}f$ denote the Jacobian matrix of vector function $f$, that is, $\mathrm{D}f(x) = \frac{\partial f}{\partial x}$. It follows that $\lim_{x\rightarrow 0}\psi_f(x)=0$ and $\lim_{x\rightarrow 0}\mathrm{D}\psi_f(x)=0$.
Define
\begin{equation} \label{III07}
 \mathrm{ M}f(x,h)=\left(\int^1_0 \mathrm{D}f(x+th)dt\right),
\end{equation}
then by mean-value theorem,
\begin{equation} \label{III08}
\psi_f(x+h)=\psi_f(x)+\mathrm{M}\psi_f(x,h)h.
\end{equation}

Secondly,a useful lemma is presented, which plays a key role in the proof of main results for both state feedback and output feedback cases.
\begin{lemma}
  \label{le03}
Given a multi-agent system \eqref{II01} with assumptions A1), A2) and A4). Suppose that for all $i\in\mathcal{V}$, there exist $C^k(k\ge 2)$ mapping $x_i=\pi_i(w_0)$, with $\pi_i(0)=0$, and $u_i=c_i(w_0)$, with $c_i(0)=0$, both defined in a neighborhood $W^0$ of origin, satisfying the conditions
\begin{subequations} \label{III09}
\begin{align}
 \label{III09a} &\frac{\partial \pi_i}{\partial w_0} s(w_0) = f_i(\pi_i(w_0))+g_i(\pi_i(w_0))c_i(w_0) \\
 \label{III09b} &h_i(\pi_i(w_0))+q(w_0)=0
\end{align}
\end{subequations}
then under the following controller
\begin{equation}
  \label{III10}
  u_i = \alpha_i(x_i,w_i)=c_i(w_i)+K_i(x_i-\pi_i(w_i)), \quad i\in\mathcal{V}.
\end{equation}
where $K_i$ is such that $A_i+B_iK_i$ is Hurwitz, conditions 1a) and 1b) will be satisfied.
\end{lemma}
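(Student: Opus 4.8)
The plan is to check the two requirements of the State Feedback Synchronized Regulator Problem separately, reducing 1a) to a routine linearization argument and 1b) to a cascade in which the exosystem synchronization guaranteed by Lemma~\ref{le02} drives a Hurwitz error dynamics through an exponentially vanishing input.

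For 1a) I would set $w_i=0$ in the controller \eqref{III10}. Since $c_i(0)=0$ and $\pi_i(0)=0$, this gives $\alpha_i(x_i,0)=K_i x_i$, so \eqref{II08} reads $\dot x_i=f_i(x_i)+g_i(x_i)K_i x_i$, whose Jacobian at the origin is $A_i+B_iK_i$. As $A_i+B_iK_i$ is Hurwitz by hypothesis, the origin is exponentially stable, which is 1a).

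For 1b), Lemma~\ref{le02} (which uses only A1) and A4)) supplies an exosystem $\dot w_0=s(w_0)$ and, for every $w(0)$ in a neighborhood $\bar W$ of the origin, an initial condition $w_0(0)\in W_0$ such that $\delta w_i:=w_i-w_0\to 0$ exponentially for all $i$. I would then pass to the error coordinate $\tilde x_i:=x_i-\pi_i(w_i)$. Differentiating, substituting \eqref{III10}, and using the regulator equation \eqref{III09a} evaluated at $w_0=w_i$ to cancel $\tfrac{\partial \pi_i}{\partial w_i}s(w_i)$ against $f_i(\pi_i(w_i))+g_i(\pi_i(w_i))c_i(w_i)$, one obtains
\begin{equation*}
\dot{\tilde x}_i=\big(A_i+B_iK_i\big)\tilde x_i+\Delta_i(x_i,w_i)\,\tilde x_i+\rho_i(t),
\end{equation*}
with $\rho_i(t)=-\tfrac{\partial \pi_i}{\partial w_i}\sum_j a_{ij}^{\sigma(t)}(\delta w_j-\delta w_i)$ (using $w_j-w_i=\delta w_j-\delta w_i$) and $\Delta_i$ collecting the linearization remainders. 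Writing $f_i(x_i)=A_ix_i+\psi_{f_i}(x_i)$, $g_i(x_i)=B_i+\psi_{g_i}(x_i)$ and applying the mean-value identity \eqref{III08}, each entry of $\Delta_i$ is a product of a bounded factor ($K_i$, a component of $c_i(w_i)$, or a constant) with a factor of the form $\mathrm{M}\psi_{(\cdot)}(\pi_i(w_i),\tilde x_i)$ or $\psi_{g_i}(x_i)$; hence $\|\Delta_i\|\to 0$ as $(\tilde x_i,w_i)\to 0$, while $\rho_i(t)\to 0$ exponentially since $\delta w_j\to 0$ exponentially and the $a_{ij}^{\sigma(t)}$ are uniformly bounded.

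To finish, take $P_i>0$ solving $P_i(A_i+B_iK_i)+(A_i+B_iK_i)^TP_i=-I$ and set $V_i=\tilde x_i^TP_i\tilde x_i$. Choosing $U$ (and, after intersecting, $\bar W$) small enough that $\|\Delta_i(x_i,w_i)\|\le \tfrac{1}{4\|P_i\|}$ on a neighborhood that the closed loop \eqref{II05} leaves invariant yields $\dot V_i\le -\tfrac12\|\tilde x_i\|^2+2\|P_i\|\,\|\rho_i(t)\|\,\|\tilde x_i\|$; this keeps $\tilde x_i$ small (the bootstrap sustaining the bound on $\Delta_i$) and, since $\rho_i(t)\to 0$, forces $\tilde x_i(t)\to 0$. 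Then $x_i-\pi_i(w_0)=\tilde x_i+\big(\pi_i(w_i)-\pi_i(w_0)\big)\to 0$ by continuity of $\pi_i$ and $\delta w_i\to 0$, so by \eqref{III09b}, $h_i(x_i(t))+q(w_0(t))=h_i(x_i(t))-h_i(\pi_i(w_0(t)))\to 0$ for all $i\in\mathcal V$, which is \eqref{II10}. The step I expect to be the main obstacle is precisely this last one: the simultaneous bookkeeping of neighborhoods. One must pick $U$ so that the full trajectory stays where $\pi_i$, $c_i$ and the remainders are defined and $\|\Delta_i\|$ is small, yet it is the smallness of $\|\Delta_i\|$ that keeps $\tilde x_i$ in that region; the circularity is broken by the standard local bootstrap, but it requires care because A1) gives only Poisson/Lyapunov stability of the exosystem, so $w_0$ (hence $\Delta_i$) does not decay to zero — only the forcing $\rho_i$ does.
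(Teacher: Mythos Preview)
Your proof is correct and shares the same skeleton as the paper's: verify 1a) by linearization, then for 1b) invoke Lemma~\ref{le02}, pass to an error coordinate, extract $(A_i+B_iK_i)$ plus a small state-dependent perturbation plus an exponentially decaying input, and close with a quadratic Lyapunov function. The one substantive difference is the choice of error coordinate. The paper takes $e_{xi}=x_i-\pi_i(w_0)$, referenced to the synchronized limit $w_0$; the forcing term then arises from the mismatch $c_i(w_i)-c_i(w_0)$ (via $\mathrm{M}c_i(w_0,\delta w_i)\delta w_i$) and is continuous in $t$, and $e_{xi}\to0$ feeds directly into \eqref{III09b}. You take $\tilde x_i=x_i-\pi_i(w_i)$, referenced to the local exosystem; the forcing $\rho_i$ then carries the switching couplings $a_{ij}^{\sigma(t)}(w_j-w_i)$ and is only piecewise continuous, and you need the extra line $x_i-\pi_i(w_0)=\tilde x_i+\pi_i(w_i)-\pi_i(w_0)\to0$ at the end. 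Your choice makes the cancellation via \eqref{III09a} cleaner---there is no residual $\pi_i(w_0)-\pi_i(w_i)$ hiding inside the $K_i$-feedback, which the paper's derivation quietly absorbs---at the price of a piecewise-constant input; this is harmless since $V_i$ remains absolutely continuous across switches. The bootstrap you flag as the main obstacle is handled in the paper in exactly the same spirit: it simply asserts that for sufficiently small $(x_i(0),w_i(0))$ the trajectories stay where $\|N\|$ is small, relying on A1) for the $w$-part and on 1a) for the $x$-part.
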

\begin{proof}
According to Assumption A2), it is true that there exists a matrix $K_i$ such that $A_i+B_iK_i$ is Hurwitz for all $i\in\mathcal{V}$. Noting that $\alpha_i(x_i,0)=K_ix_i$, condition 1a) follows directly. Below we show condition 1b).

By Lemma \ref{le02}, there is a positive scalar $\beta_1$ such that
\begin{equation} \label{III11}
  \|\delta w_i(t)\| \le \epsilon_0 e^{-\beta_1t}\|\delta w_i(0)\|, \quad \forall\,\, i\in\mathcal{V},
\end{equation}
for some positive scalar $\epsilon_0$. Consider the vector $e_{xi}=x_i-\pi_i(w_0)$. With \eqref{III09a}, its dynamics has the form
\begin{equation} \label{III12}
  \dot{e}_{xi}=f_i(x_i)+g_i(x_i)\alpha_i(x_i,w_i)-f_i(\pi_i(w_0))-g_i(\pi_i(w_0))c_i(w_0).
\end{equation}
Making use of linear approximation and mean value theorem, one has,
\begin{equation}
  f_i(x_i)-f_i(\pi_i(w_0))=A_ie_{xi}+\psi_{f_i}(x_i)-\psi_{f_i}(\pi_i(w_0)) = A_ie_{xi}+\mathrm{M}\psi_{f_i}(\pi_i(w_0),e_{x_i})e_{x_i}
\end{equation}
and
\begin{equation}
\begin{split}
 g_i(x_i)&\alpha_i(x_i,w_i)-g_i(\pi_i(w_0))c_i(w_0)
 = g_i(x_i)c_i(w_i)-g_i(\pi_i(w_0))c_i(w_0)+g_i(x_i)K_ie_{xi}\\
 =&(g_i(x_i)-g_i(\pi_i(w_0)))c_i(w_0)+g_i(x_i)(\mathrm{M}c_i(w_0,\delta{w_i})\delta{w_i}+K_ie_{xi})\\
 =&B_iK_ie_{xi}+
  \left[\mathrm{M}g_{ij}(0,x_i)x_i,\cdots, \mathrm{M}g_{im_i}(0,x_i)x_i\right]K_ie_{xi}+ g_i(x_i)\mathrm{M}c_i(w_0,\delta{w_i})\delta{w_i}
 \\& +\left(\sum_{j=1}^{m_i} c_{ij}(w_0)\mathrm{M}g_{ij}(\pi_i(w_0),e_{xi})\right)e_{xi}
 \end{split}
\end{equation}
where $g_{ij}(x_i)$ denotes the $j$th column vector of matrix function $g_i(x_i)$ and $c_{ij}(w_0)$ denotes the $j$th element of $c_i(w_0)$. With the above two equations, equation \eqref{III12} can be rewritten as
\begin{equation}
  \dot{e}_{xi}=(A_i+B_iK_i)e_{xi}+ N(w_0,x_i)e_{xi}+g_i(x_i)\mathrm{M}c_i(w_0,\delta w_i)\delta w_i, \quad i\in\mathcal{V},
\end{equation}
where
\begin{equation}
\begin{split}
N(w_0,x_i)&=\mathrm{M}\psi_{f_i}(\pi_i(w_0),x_i-\pi_i(w_0))+\sum_{j=1}^{m_i} c_{ij}(w_0)\mathrm{M}g_{ij}(\pi_i(w_0),x_i-\pi_i(w_0)) \\&+
  \left[\mathrm{M}g_{ij}(0,x_i)x_i,\cdots, \mathrm{M}g_{im_i}(0,x_i)x_i\right]K_i
\end{split}.
\end{equation}
Since $A_i+B_iK_i$ is Hurwitz, there are a symmetric positive definite matrix $P_i\in\real^{n_i\times n_i}$ and a positive scalara $\epsilon_1$ such that
\begin{equation}
  \label{III17}
P_i(A_i+B_iK_i)+(A_i+B_iK_i)^TP_i+2\epsilon_1P_i<-I_{n_i}.
\end{equation}
By assumption A1) and noticing that condition 1a) holds, there exist sufficiently small $w_i(0)$ and $x_i(0)$ (notice that $w_0$ depends on $w_i$, $i\in\mathcal{V}$), such that the trajectories of $x_i(t)$ and $w_i(t)$ of the closed-loop system \eqref{II05} satisfy
\begin{equation}
  \|N(w_0(t),x_i(t))\|\le \frac{1}{2\lambda_M(P_i)},\,\, \,\, \|g_i(x_i(t))\mathrm{M}c_i(w_0(t),\delta w_i(t))\|\le \frac{1}{\lambda_M(P_i)}, \quad \forall t>0.
\end{equation}
where $\lambda_M(P_i)$ denotes the maximum eigenvalue of $P_i$.

Then consider the Lyapunov function $V_i=e_{xi}^TP_ie_{xi}$, whose derivative satisfies
\begin{equation}
  \dot{V}_i\le - 2\epsilon_1e_{xi}^TP_ie_{xi} + 2e_{xi}^TP_i g_i(x_i)\mathrm{M}c_i(w_0,\delta w_i)\delta w_i
  \le -2\epsilon_1 V_i + 2\epsilon_2 \sqrt{V_i}\|\delta w_i\|,
\end{equation}
where $\epsilon_2>\frac{1}{\sqrt{\lambda_{m}(P_i)}}$ is a constant scalar and $\lambda_m(P_i)$ denoting the minimal eigenvalue of $P_i$.

Define $\bar{V}_i=\sqrt{V}_i$, then $\bar{V}_i\ge0$ and
\begin{equation}
  \dot{\bar{V}}_i\le -\epsilon_1\bar{V}_i+\epsilon_2 \|\delta w_i\|,
\end{equation}
which is equivalent to $\left(e^{\epsilon_1t}\bar{V}_i\right)'\le \epsilon_2 e^{\epsilon_1t}\|\delta w_i\|$. With \eqref{III11}, one further obtains
\begin{equation}
  \bar{V}_i(t)\le  e^{-\epsilon_1 t}\left(\bar{V}_i(0)-\frac{\epsilon_2\epsilon_0\|\delta w_i(0)\|}{\epsilon_1-\beta_1} \right) + e^{-\beta_1 t}\frac{\epsilon_2\epsilon_0\|\delta w_i(0)\|}{\epsilon_1-\beta_1},
\end{equation}
from which, $\bar{V}_i(t)\rightarrow 0$ as $t\rightarrow \infty$, and so does $e_{xi}$. Using \eqref{III09b}, it can be further concluded that condition 1b) will be satisfied.
\end{proof}

\begin{remark}
It should be pointed out that the proof of the above lemma is not based on the center manifold method, which can not be directly applied here in the presence of switching graphs.
\end{remark}

\bigskip
Now we are ready to present the main result for state feedback case.
\begin{theorem}
  \label{th01}
Under assumption A1), A2) and A4), the state feedback synchronized regulator problem is solvable for the multi-agent system \eqref{II01} if and only if for all $i\in\mathcal{V}$, there exist $C^k(k\ge 2)$ mapping $x_i=\pi_i(w_0)$, with $\pi_i(0)=0$, and $u_i=c_i(w_0)$, with $c_i(0)=0$, both defined in a neighborhood $W^0$ of origin, satisfying the conditions
\begin{subequations} \label{III09}
\begin{align}
 \label{III09a} &\frac{\partial \pi_i}{\partial w_0} s(w_0) = f_i(\pi_i(w_0))+g_i(\pi_i(w_0))c_i(w_0) \\
 \label{III09b} &h_i(\pi_i(w_0))+q(w_0)=0
\end{align}
\end{subequations}
\end{theorem}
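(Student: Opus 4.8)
The statement is an ``if and only if,'' and the two directions are of quite different character. For sufficiency (``if'') the plan is simply to invoke Lemma~\ref{le03}: given mappings $\pi_i$, $c_i$ solving \eqref{III09a}--\eqref{III09b}, pick for each $i$ a gain $K_i$ with $A_i+B_iK_i$ Hurwitz (possible by A2) and take $\alpha_i(x_i,w_i)=c_i(w_i)+K_i(x_i-\pi_i(w_i))$; since the hypotheses A1, A2, A4 of Lemma~\ref{le03} are exactly those of the theorem, it delivers 1a) and 1b). The only thing to add is the observation that the exosystem whose existence 1b) asserts is $\dot w_0=s(w_0)$ with $w_0(0)$ equal to the common synchronization target produced by Lemma~\ref{le02}.

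The real content is necessity (``only if''). The idea is to collapse the closed loop onto its \emph{synchronization submanifold} and then apply the classical Isidori--Byrnes necessity argument agent by agent. The set $\mathcal S=\{(x,w):w_1=\cdots=w_N\}$ is invariant for \eqref{II05} under \emph{every} value of $\sigma(t)$, because $\sum_j a^{\sigma}_{ij}(w_j-w_i)=0$ when all $w_j$ coincide; on $\mathcal S$ the coupling vanishes, so writing $w_0$ for the common value the restricted dynamics is the \emph{autonomous}, switching-free system $\dot x_i=f_i(x_i)+g_i(x_i)\alpha_i(x_i,w_0)$, $\dot w_0=s(w_0)$, $i\in\mathcal V$. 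This is precisely the closed loop of the decentralized state-feedback output regulation problem for agent $i$ with exosystem $\dot w_0=s(w_0)$; this is why the switching poses no obstruction here, in contrast to the situation noted in the Remark following Lemma~\ref{le03}.

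On $\mathcal S$ one runs the standard argument. By A1 all eigenvalues of $\mathrm{D}s(0)$ lie on the imaginary axis, while 1a) makes the linearization of $\dot x_i=f_i(x_i)+g_i(x_i)\alpha_i(x_i,0)$ Hurwitz; hence the center manifold theorem applies to the above autonomous system and yields $C^k$ maps $x_i=\pi_i(w_0)$, $\pi_i(0)=0$, whose graph is a locally attractive invariant manifold. Invariance of this manifold is exactly \eqref{III09a} with $c_i(w_0):=\alpha_i(\pi_i(w_0),w_0)$, and $c_i(0)=\alpha_i(0,0)=0$. To get \eqref{III09b}, take initial data on $\mathcal S$, so that $w_i(t)=w_0(t):=\Phi_t^s(w_0(0))$ and, by attractivity, $x_i(t)-\pi_i(w_0(t))\to 0$; then 1b) forces the fixed function $\bar e_i(w_0):=h_i(\pi_i(w_0))+q(w_0)$ to tend to zero along every orbit of $s$. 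Evaluating along the recurrence times supplied by the Poisson-stable neighborhood in A1, together with density and continuity, yields $\bar e_i\equiv 0$ near the origin, which is \eqref{III09b}.

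The step I expect to be the main obstacle is this last one, and in particular reconciling the reference exosystem: condition 1b) asserts only that \emph{some} $w_0(0)\in W_0$ makes $h_i(x_i(t))+q(w_0(t))\to 0$, and a priori this $w_0(0)$ need not be the common value of $w(0)$ fixed on $\mathcal S$. Settling this cleanly --- either by identifying the two $s$-trajectories, or by first combining 1b) with the exponential synchronization of the coupled exosystems from Lemma~\ref{le02} to conclude that the local error $e_i=h_i(x_i)+q(w_i)$ itself tends to zero --- is the delicate point; once it is in place, the recurrence/density conclusion is routine, as are the existence and regularity of the center manifold and the identification of invariance with \eqref{III09a}.
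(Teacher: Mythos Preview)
Your sufficiency argument matches the paper exactly: both simply invoke Lemma~\ref{le03}.

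For necessity, the paper's entire proof is the single sentence ``Necessity is obvious by considering the special situation with $N=1$.'' Your restriction to the synchronization submanifold $\mathcal S=\{w_1=\cdots=w_N\}$ is the natural $N$-agent version of this same idea: on $\mathcal S$ the coupling vanishes, the switching disappears, and each agent's closed loop is precisely the autonomous Isidori--Byrnes configuration, so the classical center-manifold necessity argument applies agent by agent. What you have written is thus not a different route but a fleshed-out form of the paper's one-liner, and indeed a more careful one.

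The obstacle you flag --- that the $w_0(0)$ whose existence 1b) asserts need not coincide with the common value on $\mathcal S$ --- is a genuine subtlety that the paper's ``$N=1$'' shortcut glosses over just as much as your argument does; nothing in Definition~1 forces the exosystem trajectory in 1b) to be the synchronization target of Lemma~\ref{le02}. Your second proposed fix (combine 1b) with exponential synchronization of the $w_i$ to deduce that the \emph{local} error $e_i=h_i(x_i)+q(w_i)\to 0$, then run the Poisson-recurrence argument with $w_i$ in place of $w_0$) is the cleanest way through, and on $\mathcal S$ it collapses to the standard Isidori--Byrnes step since $w_i\equiv w_0^*$. So your plan is sound; you are simply being more scrupulous than the paper at the one delicate point.
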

\begin{proof}
Necessity is obvious by considering the special situation with $N=1$.  The sufficiency follows immediately from Lemma \ref{le03}.
\end{proof}

\begin{remark}
Theorem \ref{th01} says that the solvability condition for the local SOR is the same as that for the local output regulation of a single agent.
\end{remark}

\subsection{Solution of output feedback synchronized regulator problem}
\begin{theorem}\label{th02}
Under assumptions A1)$\sim$A4), the output feedback synchronized regulator problem is solvable for the multi-agent system \eqref{II01} if and only if there exist $C^k(k\ge 2)$ mapping $x_i=\pi_i(w_0)$, with $\pi_i(0)=0$, and $u_i=c_i(w_0)$, with $c_i(0)=0$, both defined in a neighborhood $W^0$ of origin, satisfying the conditions
\setcounter{equation}{8}
\begin{subequations} \label{III09}
\begin{align}
 \label{III09a} &\frac{\partial \pi_i}{\partial w_0} s(w_0) = f_i(\pi_i(w_0))+g_i(\pi_i(w_0))c_i(w_0) \\
 \label{III09b} &h_i(\pi_i(w_0))+q(w_0)=0
\end{align}
\end{subequations}
\end{theorem}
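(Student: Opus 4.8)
The plan is to mirror the structure of Theorem \ref{th01}, but now handle the observer state $z_i$ in addition to $x_i$ and $w_i$. As before, necessity is immediate: setting $N=1$ collapses the problem to the classical error-feedback output regulation problem for a single agent, whose solvability is well known to be equivalent to the existence of $\pi_i$ and $c_i$ solving the regulator equations \eqref{III09a}--\eqref{III09b} (see \cite{IsidoriTAC1990}). So the work is entirely in the sufficiency direction, and I expect it to be an output-feedback analogue of Lemma \ref{le03}.

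First I would construct the controller. Under A2) and A3), pick $K_i$ with $A_i+B_iK_i$ Hurwitz and $G_i$ with $A_i+G_iC_i$ Hurwitz; then take a Luenberger-type observer $\dot z_i = f_i(z_i) + g_i(z_i)\alpha_i(z_i,w_i) + G_i\big(h_i(z_i)-y_i\big)$ together with $u_i=\alpha_i(z_i,w_i)=c_i(w_i)+K_i\big(z_i-\pi_i(w_i)\big)$, which has the form required by \eqref{II06} with $\eta_i(z_i,y_i)=f_i(z_i)+g_i(z_i)\alpha_i(z_i,w_i)+G_i(h_i(z_i)-y_i)$. Condition 2a) then follows by linearizing \eqref{II11} at the origin: with $w_i\equiv 0$ the $(x_i,z_i)$ subsystem has linearization governed by the block-triangular matrix with diagonal blocks $A_i+B_iK_i$ and $A_i+G_iC_i$ (after the usual change of coordinates to the estimation error $z_i-x_i$), both Hurwitz, so $(x,z)=0$ is exponentially stable.

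For 2b), I would introduce the two error variables $e_{xi}=x_i-\pi_i(w_0)$ and $e_{zi}=z_i-x_i$ (or $z_i-\pi_i(w_0)$), recalling from Lemma \ref{le02} that $\delta w_i=w_i-w_0$ decays exponentially, with $w_0$ governed by $\dot w_0=s(w_0)$. Using the regulator equations \eqref{III09a}--\eqref{III09b} to cancel the ``feedforward'' terms, and expanding $f_i$, $g_i$, $h_i$ about the relevant points via the remainder/mean-value machinery (the operators $\psi$, $\mathrm{M}$ from \eqref{III07}--\eqref{III08}), the $(e_{xi},e_{zi})$ dynamics should take the form $\dot e = (\mathcal{A}_i + \mathcal{N}_i(\cdot))e + \mathcal{B}_i(\cdot)\delta w_i$, where $\mathcal{A}_i$ is the Hurwitz block-triangular matrix above, $\mathcal{N}_i$ is a state-dependent perturbation vanishing at the origin, and $\mathcal{B}_i$ is bounded. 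Then, exactly as in Lemma \ref{le03}, fix $P_i\succ0$ with $P_i\mathcal{A}_i+\mathcal{A}_i^TP_i+2\epsilon_1 P_i<-I$; by A1) (Poisson stability keeping $w_0$, hence all states, small) and 2a) one makes the trajectories small enough that $\|\mathcal{N}_i\|$ and $\|\mathcal{B}_i\|$ are dominated by $P_i$-dependent constants, and the Lyapunov function $V_i=e^TP_ie$ yields $\dot{\bar V}_i\le -\epsilon_1\bar V_i+\epsilon_2\|\delta w_i\|$ with $\bar V_i=\sqrt{V_i}$. Integrating against the exponential bound on $\|\delta w_i\|$ forces $e_{xi}\to0$, and \eqref{III09b} then gives $h_i(x_i(t))+q(w_0(t))\to0$, which is \eqref{II13}.

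The main obstacle I anticipate is bookkeeping in the $(e_{xi},e_{zi})$ expansion: one must verify that \emph{every} cross term produced by expanding $g_i(x_i)\alpha_i(z_i,w_i)$, $g_i(z_i)\alpha_i(z_i,w_i)$, and $h_i(z_i)-h_i(x_i)$ is either absorbed into the Hurwitz block $\mathcal{A}_i$, into a vanishing perturbation $\mathcal{N}_i(e,w_0)$ that is $o(1)$ as $(e,w_0)\to0$ uniformly along trajectories, or into the $\delta w_i$-channel with bounded coefficient — in particular the term $g_i(x_i)\,\mathrm{M}c_i(w_0,\delta w_i)\,\delta w_i$ and its $z$-counterpart must land in $\mathcal{B}_i\delta w_i$, not leak into the homogeneous part. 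Getting the coordinate choice right (so that $\mathcal{A}_i$ really is block-triangular with the two Hurwitz blocks) is what makes the rest go through; the Lyapunov/Gr\"onwall endgame is then routine and identical in spirit to Lemma \ref{le03}.
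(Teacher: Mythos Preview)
Your proposal is correct and the controller you construct is exactly the paper's \eqref{III24}. The only substantive difference is organizational: you plan to expand the joint error dynamics for $(e_{xi},e_{zi})$ from scratch and rerun the Lyapunov/Gr\"onwall argument of Lemma~\ref{le03}, whereas the paper avoids this repetition by lifting to the augmented state $\tilde{x}_i=(x_i,z_i)$, defining $\tilde{f}_i,\tilde{g}_i,\tilde{h}_i,\tilde{\pi}_i$ so that the closed loop reads $\dot{\tilde{x}}_i=\tilde{f}_i(\tilde{x}_i)+\tilde{g}_i(\tilde{x}_i)u_i$ with $u_i=c_i(w_i)+[0\ K_i](\tilde{x}_i-\tilde{\pi}_i(w_i))$, checking that the augmented regulator equations \eqref{III09} hold for $(\tilde{\pi}_i,c_i)$, and that $\tilde{A}_i+\tilde{B}_i[0\ K_i]$ equals the Hurwitz matrix \eqref{III23}. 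Lemma~\ref{le03} then applies verbatim to the augmented system, and 2a)--2b) follow. This buys exactly the bookkeeping you flag as the main obstacle: all the cross terms from $g_i(x_i)\alpha_i(z_i,w_i)$, $g_i(z_i)\alpha_i(z_i,w_i)$, and $h_i(z_i)-h_i(x_i)$ are automatically absorbed into the single expression $\tilde{f}_i(\tilde{x}_i)+\tilde{g}_i(\tilde{x}_i)\alpha_i$, and the block-triangularization you mention is encoded once in the verification that \eqref{III23} is Hurwitz. Your direct route would of course reproduce the same estimates line by line, but recognizing that Lemma~\ref{le03} can be invoked as a black box on the augmented plant is the cleaner move.
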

\begin{proof}
Necessity is clear. Below we show the sufficiency by using a constructive method. Assumption A2) and A3) mean that there are matrices $K_i$ and $L_i$ such that
\setcounter{equation}{21}
\begin{equation}
  A_i+B_iK_i \quad \mbox{and}\quad A_i+L_iC_i
\end{equation}
are Hurwitz. By them, the following matrix
\begin{equation} \label{III23}
  \begin{bmatrix}
    A_i & B_iK_i \\
    -L_iC_i & A_i+B_iK_i+L_iC_i
  \end{bmatrix}
\end{equation}
is also Hurwitz. Suppose there are two maps $\pi_i(w_0)$ and $c_i(w_0)$ satisfying \eqref{III09}, then set the dynamic controller to be
\begin{subequations} \label{III24}
\begin{align}
&u_i=\alpha_i(z_i,w_i)=c_i(w_i)+K_i(z_i-\pi_i(w_i)), \label{III24a}\\
&\dot{z}_{i}=\eta_i(z_i,h_i)=f_i(z_{i})+g_i(z_{i})u_i+L_i(h_i(z_i)-h_i(x_i)), \label{III24b}
\end{align}
\end{subequations}
for all $i\in\mathcal{V}$. Define the augmented state $\tilde{x}_i = [x_i^T, z_i^T]^T$, and maps
\begin{gather}
  \label{III25}
  \tilde{f}_i({\tilde{x}_i})=\begin{bmatrix}
    f_i(x_i)\\
    -L_ih_i(x_i)+L_ih_i(z_i)+f_i(z_i)
  \end{bmatrix}, \quad \tilde{g}_i(\tilde{x}_i)=\begin{bmatrix}
    g_i(x_i) \\ g_i(z_i)
  \end{bmatrix}, \nonumber \\
  \tilde{\pi}_i(w_0)=\begin{bmatrix}
    \pi_i(w_0) \\ \pi_i(w_0)
  \end{bmatrix}, \quad \tilde{h}_i(\tilde{x}_i)=h_i(x_i).
\end{gather}
With them, one has
\begin{equation} \label{III26}
  \dot{\tilde{x}}_i = \tilde{f}_i(\tilde{x}_i)+\tilde{g}_i(\tilde{x}_i)u_i, \quad \tilde{y}_i=\tilde{h}_i(\tilde{x}_i), \quad i\in\mathcal{V},
\end{equation}
and
\begin{equation} \label{III27}
\begin{split}
 &\frac{\partial \tilde{\pi}_i}{\partial w_0} s(w_0) = \tilde{f}_i(\tilde{\pi}_i(w_0))+\tilde{g}_i(\tilde{\pi}_i(w_0))c_i(w_0), \\
&\tilde{h}_i(\tilde{\pi}_i(w_0))+q(w_0)=0.
\end{split}
\end{equation}
The Jacobian matrices of $\tilde{f}_i(\tilde{x}_i)$ and $\tilde{g}_i(\tilde{x}_i)$ have the form of
\begin{equation}
  \tilde{A}_i=\left[
  \frac{\partial \tilde{f}_i}{\partial \tilde{x}_i}
  \right]_{\tilde{x}_i=0}=\begin{bmatrix}
    A_i & 0 \\
    -L_iC_i & A_i+L_iC_i
  \end{bmatrix}, \quad \tilde{B}_i=\left[
  \frac{\partial \tilde{g}_i}{\partial \tilde{x}_i}
  \right]_{\tilde{x}_i=0}=\begin{bmatrix}
    B_i & 0 \\
    0 & B_i
  \end{bmatrix}
\end{equation}
Noting that $\tilde{A}_i+\tilde{B}_i[0, K_i]$ has exactly the form \eqref{III23} and that controller \eqref{III24a} can be rewritten as $u_i=c_i(w_i)+[0, K_i](\tilde{x}_i-\tilde{\pi}_i(w_i))$, by Lemma \ref{le03} conditions 1a) and 1b) are satisfied for multi-agent system \eqref{III26} with controller \eqref{III24a}, and subsequently conditions 2a) and 2b) are satisfied for the closed-loop system \eqref{II07}. Therefore the output feedback synchronized regulator problem is solved under controller \eqref{III24}.
\end{proof}

\begin{remark}
Again it is without extra requirements for solving the output feedback synchronized regulator problems. This might be understood in the sense that sometimes the local property of a nonlinear system can be obtained from its linearization system. Besides, compared with the solution of state feedback case, an additional observer-like compensator is added here.
\end{remark}

\section{Extension to Leader-following case} \label{sec04}
The contents above are for the leaderless case, which only requires that the agents have their outputs synchronize on the common manifold, but does not designate their values that are self-organized. While many real applications, such as power networks, require the output of each agent synchronizes on a designated reference trajectory. Such a case is referred to as leader-following output regulation and has been studied for linear systems, for example in \cite{XiangTAC2009} \cite{WangTAC2010} and \cite{SuTAC2012}. In this section, we further extend the above results to the leader-following case for nonlinear systems.

Similar to that in \cite{WangTAC2010}, set the reference exosystem
\begin{equation} \label{IV01}
  \dot{w}_0=s(w_0)
\end{equation}
to be leader node, indexed as $0$, so as to form an augmented multi-agent system over an augmented graph $\bar{\mathcal{G}}=\{\bar{\mathcal{V}},\bar{\mathcal{E}}\}$, where $\bar{\mathcal{V}}=\{0\cup\mathcal{V}\}$ and $\bar{\mathcal{E}}=\{\mathcal{E}_0\cup\mathcal{E}\}$. Edge set $\mathcal{E}_0\subseteq \{0 \times \mathcal{V}\}$. $(0,i)\in\mathcal{E}_0$ if and only if node $i$ has the state information of reference exosystem $w_0$. In such a configuration, the first equation in controllers both \eqref{II04} and \eqref{II06} should be changed to be
\begin{equation}
  \dot{w}_i=s(w_i)+\sum_{j=0}^N \bar{a}_{ij}^{\sigma(t)}(w_j-w_i), \quad i\in\mathcal{V},
\end{equation}
where $\bar{a}^{\sigma(t)}_{ij}$ denotes the element of adjacency matrix $\bar{A}^{\sigma(t)}$ of diagraph $\bar{\mathcal{G}}_{\sigma(t)}$. Also, Assumption A4) should be replaced by
\begin{enumerate}
 \item[A5)] There is a bounded time length $T$ and a starting time $t>0$ such that for each $k$, $k=1,2,\cdots$, the union graph $\bar{\mathcal{G}}_{[t+(k-1)T,t+kT]}$ has a spanning tree embedded.
\end{enumerate}
\begin{remark}
 Assumption A4) is not necessary for Assumption A5). On the other hand, noticing that edge $(i,0)$ does not belong to $\bar{\mathcal{E}}$, a spanning tree, if exists, must be rooted at node $0$.
\end{remark}

Below for simplicity, the result of leader-following output regulation only for output feedback case is presented straightforwardly.
\begin{theorem}
Given a multi-agent system \eqref{II01} and an exosystem \eqref{IV01} satisfying assumptions A1)$\sim$A3) and $A5)$. There is a dynamic controller of the form
\begin{equation}
  \left\{ \begin{split}
  \dot{w}_i& =s(w_i)+\sum_{j=0}^N \bar{a}_{ij}^{\sigma(t)}(w_j-w_i))\\
  \dot{z}_i& = \eta_i(z_i,y_i)\\
    u_i &= \alpha_i(z_i,w_i)
\end{split},
\right. \quad i\in\mathcal{V}
\end{equation}
such that for all sufficiently small $x_i(0)$, $w_i(0)$, $z_i(0)$ and $w_0(0)$, the trajectory of the closed-loop system is bounded and satisfies
\begin{equation}
  \lim_{t\rightarrow 0} \left(h_i(x_i)+q(w_0)\right)\rightarrow 0,
\end{equation}
if and only if there exist $C^k(k\ge 2)$ mapping $x_i=\pi_i(w_0)$, with $\pi_i(0)=0$, and $u_i=c_i(w_0)$, with $c_i(0)=0$, both defined in a neighborhood $W^0$ of origin, satisfying the conditions
\begin{equation}
\begin{split}
 &\frac{\partial \pi_i}{\partial w_0} s(w_0) = f_i(\pi_i(w_0))+g_i(\pi_i(w_0))c_i(w_0) \\
&h_i(\pi_i(w_0))+q(w_0)=0
\end{split}, \quad i\in\mathcal{V}.
\end{equation}
\end{theorem}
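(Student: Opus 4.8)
The plan is to obtain this theorem as a short corollary of the two devices already assembled: the exponential synchronization Lemma~\ref{le02} and the stabilization-plus-observer construction of Theorem~\ref{th02} (which itself rests on Lemma~\ref{le03}). For necessity I would simply specialize, exactly as in Theorem~\ref{th02}: take $N=1$ with a fixed augmented graph in which the lone follower is permanently linked to the leader (such a graph trivially meets A5)); then $w_1(t)\to w_0(t)$ and the closed loop reduces to the classical single-agent error-feedback output regulation problem, whose solvability is equivalent to the existence of $C^k$ solutions of the regulator equations \cite{IsidoriTAC1990}. Hence the regulator equations are necessary.

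For sufficiency I would proceed in two steps. First I would show that the followers' exosystems track the leader's exosystem exponentially. The trick is to adjoin the leader as node $0$ and view the family $\{w_0,w_1,\dots,w_N\}$ as an $(N+1)$-agent coupled exosystem of the form \eqref{II02} over $\bar{\mathcal G}_{\sigma(t)}$, whose adjacency matrix has its $0$-th row identically zero (the leader receives nothing, so the equation for node $0$ is just $\dot w_0=s(w_0)$). Assumption A5) is precisely Assumption A4) applied to $\bar{\mathcal G}_{\sigma(t)}$, so Lemma~\ref{le02} applies and gives, for all sufficiently small initial data, a solution of $\dot w_0=s(w_0)$ onto which $w_0,w_1,\dots,w_N$ synchronize exponentially; since node $0$ already runs along such a solution and $\|w_i-w_0\|\to 0$, that solution is the leader's own trajectory, i.e. $\|\delta w_i(t)\|\le\epsilon_0 e^{-\beta_1 t}\|\delta w_i(0)\|$ for $\delta w_i=w_i-w_0$ and some constants $\epsilon_0,\beta_1>0$. (The structural reason the synchronized value is pinned to the leader rather than to an arbitrary solution of $\dot w_0=s(w_0)$ is already recorded: any spanning tree of $\bar{\mathcal G}$ must be rooted at node $0$.)

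Second, with this bound in hand I would rerun the construction in the proof of Theorem~\ref{th02} essentially verbatim: choose $K_i,L_i$ so that $A_i+B_iK_i$ and $A_i+L_iC_i$ are Hurwitz, use the controller \eqref{III24}, form $\tilde x_i=[x_i^T,z_i^T]^T$ and the augmented data $\tilde f_i,\tilde g_i,\tilde\pi_i,\tilde h_i$ of \eqref{III25}, verify the augmented regulator equations \eqref{III27} and that the linearization under the feedback $[0,K_i]$ has the Hurwitz block form \eqref{III23}, and then invoke Lemma~\ref{le03}. The key observation is that the only place the proof of Lemma~\ref{le03} uses the graph assumption is through the exosystem bound \eqref{III11}, which is now supplied by step one in place of Lemma~\ref{le02}; the Lyapunov/comparison estimate for $e_{\tilde x i}=\tilde x_i-\tilde\pi_i(w_0)$ then yields $e_{\tilde x i}(t)\to 0$, whence $h_i(x_i(t))+q(w_0(t))\to 0$ for all $i\in\mathcal V$ by \eqref{III09b}. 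Boundedness of the whole closed loop follows because $w_0$ stays near the origin by A1) while $\delta w_i\to 0$ and $e_{\tilde x i}\to 0$, so $x_i=e_{xi}+\pi_i(w_0)$, $w_i=\delta w_i+w_0$, and $z_i$ all remain bounded.

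The hard part is genuinely only the first step: one must check that Lemma~\ref{le02}, stated for the symmetric leaderless coupling \eqref{II02}, legitimately covers an augmented graph possessing a source node, and that the common limit is pinned to the leader's trajectory — which is exactly where the ``spanning tree rooted at $0$'' structure is used, together with the fact that a solution of $\dot w_0=s(w_0)$ that is exponentially attracting all agents' states and also coincides with the autonomously evolving node $0$ must be node $0$'s trajectory. Everything downstream is a mechanical replay of the proofs of Lemma~\ref{le03} and Theorem~\ref{th02}, so I would keep that part brief.
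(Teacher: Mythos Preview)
Your proposal is correct and matches the paper's own (implicit) argument: the paper does not give a separate proof of this theorem but presents it ``straightforwardly'' as a direct extension, noting in the surrounding remarks that any spanning tree of $\bar{\mathcal G}$ must be rooted at node $0$ and that the leader-following case can be regarded as a special leaderless case. Your two-step plan---adjoin the leader to form an $(N{+}1)$-agent coupled exosystem so that A5) is exactly A4) for $\bar{\mathcal G}_{\sigma(t)}$, invoke Lemma~\ref{le02} to pin synchronization to $w_0$, then replay Lemma~\ref{le03}/Theorem~\ref{th02}---is precisely the route the paper intends.
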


\begin{remark}
For a leaderless multi-agent system, if one agent does not adjust the exosystem constructed by itself so that equivalently the agent does not receive the information of others (but sent its information to others), and assumption A4) is still satisfied, then the leaderless case reduces to the leader-following case. In this consideration, the leader-following case can be regraded as a special leaderless case.
\end{remark}

\section{Simulation Example} \label{sec05}
For the sack of simpleness, a multi-agent system of three nodes is taken as an illustrated example. These nodes are described respectively by the following equations.
\begin{figure}
  \centering
  \includegraphics[width=0.8\textwidth]{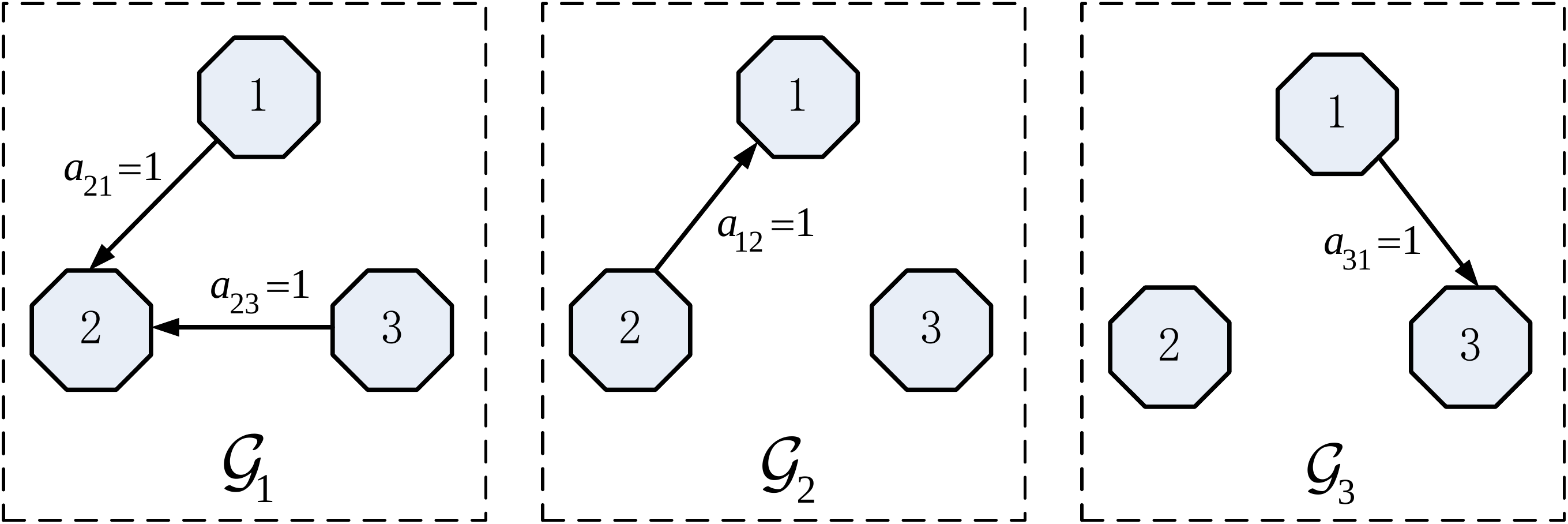}\\
  \caption{Three kinds of graphs involved in the communication of agents}\label{fig01}
\end{figure}

\begin{itemize}
\item Agent-1 is with state $x_1\in\real$, and $\dot{x}_1=x_1^2+u_1$ and $y_1=x_1$.
\item Agent-2 is with state $x_2\in\real^2$ and
\begin{equation}
\begin{split}
  &\dot{x}_{21}= -x_{21}+x_{22}, \quad \dot{x}_{22}=x_{21}^2+u_{22},\\
  &y_{2}=x_{21}
\end{split}
\end{equation}
\item Agent-3 with state $x_3\in\real^2$ and
\begin{equation}
  \begin{split}
    &\dot{x}_{31}=x_{32},\quad  \dot{x}_{32}=-x_{31}+x_{32} - x_{31}^3+u_3 \\
   & y_3 = x_{31}
  \end{split}
\end{equation}
\end{itemize}
The dynamics that their outputs want to manifest is a sinusoid wave, formulated by
\begin{equation}
  \dot{w}_0=s(w_0)=\begin{bmatrix}
    0 & \tau\\
    -\tau & 0
  \end{bmatrix}w_0, \quad q(w_0)=w_{01}
\end{equation}
where $\tau$ denotes the angle frequency of the sinusoid wave. Here notations $x_{ij}$ and $w_{ij}$ denote the $j$-th element of $x_i$ and $w_i$, respectively. It can be verified that for the three agents, the regulator equation \eqref{III09} has solutions with, respectively,
\begin{subequations}
\begin{align}
  &\pi_1(w_0)=w_{01}, \quad c_1(w_0)=\tau w_{02}-w_{01}^2\\
  &\pi_2(w_0)=\begin{bmatrix}
    w_{01}\\
    w_{01}+\tau w_{02}
  \end{bmatrix}, \quad c_2(w_0)=\tau w_{02}-\tau^2w_{01}-w_{01}^2\\
  &\pi_3(w_0)=\begin{bmatrix}
    w_{01} \\ \tau w_{02}
  \end{bmatrix}, \quad c_3(w_0)=w_{01}^3+(1-\tau^2)w_{01}-\tau w_{02}
\end{align}
\end{subequations}
Also, it can be seen that the agents satisfy Assumption A2) and A3). The feedback gains are designed as
\begin{subequations}
  \begin{align}
    &K_1=-5\\
    &K_2 = [-12,-8], \quad L_2^T= [-8, -20]\\
    &K_3 = [-11,-8], \quad L_3^T= [-10,-30]
  \end{align}
\end{subequations}
For agent $1$ is a state feedback controller, while for agent $2$ and $3$ are output feedback controllers. The communication graph is switched randomly among three digraphs in Fig. \ref{fig01} with a fixed time interval $D_t=0.25$s.

Simulation results are shown in Fig. \ref{fig02} with angle frequency $\tau=10$ , where all initial conditions are randomly produced with each element being in the region of $[-1,1]$. It can be seen after transition time, all the agents have their outputs not only synchronize but also demonstrate a sinusoid wave, although the associated communication graph is randomly switching.

\begin{figure}
  \centering
  \includegraphics[width=\textwidth]{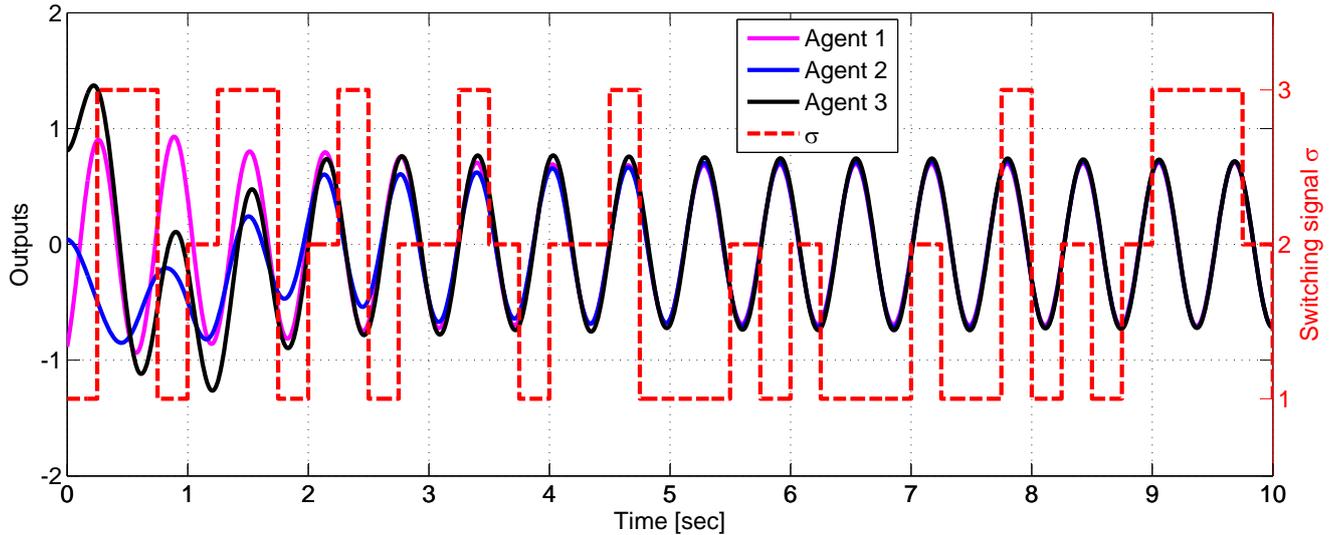}\\
  \caption{Trajectories of outputs of three agents, $y_i$, $i=1,2,3$, and the graph index $\sigma(t)$}\label{fig02}
\end{figure}

\section{Conclusion} \label{sec06}
It has been shown that the sufficient and necessary condition that the agent dynamics should satisfy for the solvability of SOR problem is the same as that for nonlinear output regulation problem. Both the dynamic state feedback controller and the dynamic output feedback controller have been respectively presented. Both of them can achieve the SOR if the switching graph satisfies the bounded interconnectivity times condition. Extension to error feedback controller is an appealing topic for future work.

\bibliographystyle{unsrt}        

\begin{thebibliography}{10}

\bibitem{WuTCASI1995}
C.~W. Wu and L.~O. Chua.
\newblock Synchronization in an array of linearly coupled dynamical systems.
\newblock {\em Circuits and Systems I: Fundamental Theory and Applications,
  IEEE Transactions on}, 42(8):430--447, 1995.

\bibitem{ScardoviAU2009}
L.~Scardovi and R.~Sepulchre.
\newblock Synchronization in networks of identical linear systems.
\newblock {\em Automatica}, 45(11):2557 -- 2562, 2009.

\bibitem{SeoAU2009}
J.~Seo, H.~Shim, and J.~Back.
\newblock Consensus of high-order linear systems using dynamic output feedback
  compensator: Low gain approach.
\newblock {\em Automatica}, 45(11):2659 -- 2664, 2009.

\bibitem{LiTCASI2010}
Z.~Li, Z.~Duan, G.~Chen, and L.~Huang.
\newblock {Consensus of Multiagent Systems and Synchronization of Complex
  Networks: A Unified Viewpoint}.
\newblock {\em IEEE Transactions on Circuits and Systems I: Regular Papers},
  57(1):213 --224, 2010.

\bibitem{DzhunusovARC2011}
I.~Dzhunusov and A.~Fradkov.
\newblock Synchronization in networks of linear agents with output feedbacks.
\newblock {\em Automation and Remote Control}, 72:1615--1626, 2011.

\bibitem{ChopraBK2006}
N.~Chopra and M.~W. Spong.
\newblock Passivity-based control of multi-agent systems.
\newblock In Sadao Kawamura and Mikhail Svinin, editors, {\em Advances in Robot
  Control}, pages 107--134. Springer Berlin Heidelberg, 2006.

\bibitem{XiaoSCL2010}
F.~Xiao, L.~Wang, and J.~Chen.
\newblock Partial state consensus for networks of second-order dynamic agents.
\newblock {\em Systems \& Control Letters}, 59(12):775 -- 781, 2010.

\bibitem{XiangTAC2009}
J.~Xiang, W.~Wei, and Y.~Li.
\newblock Synchronized output regulation of linear networked systems.
\newblock {\em IEEE Transactions on Automatic Control}, 54(6):1336 --1341, june
  2009.

\bibitem{WielandAU2011}
P.~Wieland, R.~Sepulchre, and F.~Allg\"{o}wer.
\newblock An internal model principle is necessary and sufficient for linear
  output synchronization.
\newblock {\em Automatica}, 47(5):1068 -- 1074, 2011.

\bibitem{WangTAC2010}
X.~Wang, Y.~Hong, J.~Huang, and Z.~Jiang.
\newblock A distributed control approach to a robust output regulation problem
  for multi-agent linear systems.
\newblock {\em IEEE Trans. Autom. Control}, 55(12):2891 --2895, dec. 2010.

\bibitem{KimTAC2011}
H.~Kim, H.~Shim, and J.~Seo.
\newblock Output consensus of heterogeneous uncertain linear multi-agent
  systems.
\newblock {\em IEEE Trans. Autom. Control}, 56(1):200 --206, jan. 2011.

\bibitem{SuTSMCB2012}
Y.~Su and J.~Huang.
\newblock Cooperative output regulation with application to multi-agent
  consensus under switching network.
\newblock {\em IEEE Transactions on Systems, Man, and Cybernetics, Part B:
  Cybernetics}, 42(3):864--875, 2012.

\bibitem{GaziIJC2005}
V.~Gazi.
\newblock Formation control of a multi-agent system using non-linear
  servomechanism.
\newblock {\em International Journal of Control}, 78(8):554--565, 2005.

\bibitem{LiuIETCTA2012}
L.~Liu.
\newblock Robust cooperative output regulation problem for non-linear
  multi-agent systems [brief paper].
\newblock {\em IET Control Theory \& Applications}, 6(13):2142--2148, 2012.

\bibitem{XuCCC2012}
D.~Xu and Y.~Hong.
\newblock Distributed output regulation of nonlinear multi-agent systems based
  on networked internal model.
\newblock In {\em the 31st Chinese Control Conference (CCC)}, pages 6483--6488,
  2012.

\bibitem{LiuND2012}
J.~Liu, Z.~Liu, and Z.~Chen.
\newblock Coordinative control of multi-agent systems using distributed
  nonlinear output regulation.
\newblock {\em Nonlinear Dynamics}, 67(3):1871--1881, 2012.

\bibitem{IsidoriTAC1990}
A.~Isidori and C.~I. Byrnes.
\newblock Output regulation of nonlinear systems.
\newblock {\em IEEE Transactions on Automatic Control}, 35(2):131--140, 1990.

\bibitem{XiangIJC2011}
J.~Xiang and W.~Wei.
\newblock On local synchronisability of nonlinear networked systems with a unit
  inner-coupling matrix and switching topology.
\newblock {\em International Journal of Control}, 84:1769--1778, 2011.

\bibitem{HongTAC2007}
Y.~Hong, L.~Gao, D.~Cheng, and J.~Hu.
\newblock Lyapunov-based approach to multiagent systems with switching jointly
  connected interconnection.
\newblock {\em Automatic Control, IEEE Transactions on}, 52(5):943 --948, may
  2007.

\bibitem{lin2006Thesis}
Z.~Lin.
\newblock {\em Coupled dynamic systems: From structure towards stability and
  stabilizability}.
\newblock PhD thesis, Univeristy of Toronto, 2006.

\bibitem{CenciniBOOK2010}
M.~Cencini, F.~Cecconi, and A.~Vulpiani.
\newblock {\em Chaos: From Simple Models to Complex Systems}.
\newblock World Scientific, 2010.

\bibitem{SuTAC2012}
Y.~Su and J.~Huang.
\newblock Cooperative output regulation of linear multi-agent systems.
\newblock {\em IEEE Transactions on Automatic Control}, 57(4):1062--1066, 2012.

\end{thebibliography}

\end{document}